\newtheorem{theorem}{Theorem}[section]
\newtheorem{lemma}[theorem]{Lemma}
\newtheorem{corollary}[theorem]{Corollary}
\theoremstyle{definition}
\newtheorem{example}[theorem]{Example}
\theoremstyle{remark}
\numberwithin{equation}{section}
\begin{document}
\setcounter{page}{1}


\title[Commuting Jordan derivations on triangular rings are zero]{Commuting Jordan derivations on triangular rings are zero}

\author[Amin Hosseini]{Amin Hosseini$^*$}

\address{ Amin Hosseini, Kashmar Higher Education Institute, Kashmar, Iran}
\email{\textcolor[rgb]{0.00,0.00,0.84}{a.hosseini@kashmar.ac.ir}}

\author[Wu Jing]{Wu Jing}

\address{ Wu Jing, Department of Mathematics and Computer Science, Fayetteville State University, Fayetteville NC 28301 USA}
\email{\textcolor[rgb]{0.00,0.00,0.84}{wjing@uncfsu.edu}}


\subjclass[2020]{Primary 16W25; Secondary 16N60,   15A78}

\keywords{Jordan derivation, commuting map, left (resp. right) Jordan  derivation,  triangular ring}

\date{Received: xxxxxx; Revised: yyyyyy; Accepted: zzzzzz.
\newline \indent $^{*}$ Corresponding author}

\begin{abstract}
The main purpose of this article is to show that every commuting Jordan derivation on triangular rings (unital or not) is identically zero. Using this result, we prove that if $\mathcal{A}$ is a 2-torsion free ring such that it is either
semiprime or satisfies Condition (P), then every commuting Jordan derivation from $\mathcal{A}$ into itself, under certain conditions, is identically zero.
\end{abstract} \maketitle

\section{Introduction and Preliminaries}
Let $\mathcal{R}$ be an associative ring with center $Z(\mathcal{R})$. We write $[x, y] = xy - yx$ for all $x, y \in \mathcal{R}$. A ring $\mathcal {R}$ is said to be 2-torsion free, if whenever $2a=0$, with $a\in \mathcal {R}$, then $a=0$. A ring $\mathcal {R}$ is called a prime ring if $a\mathcal {R}b=\{0\}$ implies $a=0$ or $b=0$, and also a ring $\mathcal {R}$ is called a semiprime ring if $a\mathcal {R}a=\{0\}$ implies $a=0$. A ring $\mathcal{R}$ satisfies Condition (P) if $x a x = 0$ for all $x \in \mathcal{R}$, then $a = 0$. Clearly, every unital ring satisfies this condition. For non-unital rings that  satisfy Condition (P), the reader is referred to \cite{F}.

Recall that an additive map $\Delta :\mathcal {R}\to \mathcal {R}$ is called a derivation if $\Delta (ab)=\Delta (a)b+a\Delta (b)$ for all $a, b\in \mathcal {R}$. $\Delta $ is said to be a Jordan derivation if $\Delta (a^2)=\Delta (a)a+a\Delta (a)$ for all $a\in \mathcal {R}$. Also, $\Delta $ is called a left (resp. right) Jordan derivation if $\Delta(a^2)=2a\Delta (a)$ (resp. $\Delta (a^2)=2\Delta (a)a$) for any $a\in \mathcal {A}$. For more details about left (Jordan) derivations, see, e.g. \cite{B-V, V9}.

The first result on a Jordan derivation to be a derivation is due to  Herstein \cite{He} who proved that every
Jordan derivation on a 2-torsion free prime ring is a derivation. 
Cusack \cite{Cu} generalized Herstein's result to 2-torsion free semiprime rings (see also \cite{B1} for an alternative proof). In 2008, Vukman \cite{V9} studied left Jordan derivations on semiprime rings. In that article he showed that if $\mathcal{R}$ is a 2-torsion free semiprime ring and $\Delta : \mathcal{R} \rightarrow \mathcal{R}$ is a left Jordan derivation, then $\Delta$ is a derivation which maps $\mathcal{R}$ into $Z(\mathcal{R})$. In recent years, the characterizations of Jordan derivations on triangular rings have been studied. For example, it was proved in \cite{F} that, under some conditions, every Jordan derivation on a triangular ring (without assuming unity) is a derivation. For more studies concerning Jordan derivations we refer the reader to \cite{G, Z} and the references therein.

A mapping $\mathfrak{F}:\mathcal{R} \rightarrow \mathcal{R}$ is said to be centralizing on a subset $\mathfrak{X}$ of $\mathcal{R}$ if $[\mathfrak{F} (x), x] \in Z(\mathcal{R})$ for all $x \in \mathfrak{X}$. In particular, if $[\mathfrak{F} (x), x] = 0$ for all $x \in \mathfrak{X}$, then $\mathfrak{F}$ is called commuting on $\mathfrak{X}$. In the last few decades, the  commuting maps has been one of the most active topics in the study of mappings on rings. For commuting maps, we refer the readers to the very nice survey paper \cite{B}. The history of commuting and centralizing mappings goes back to 1955 when Divinsky \cite{D} proved that a simple artinian ring is commutative if it has a commuting nontrivial automorphism.  Two years later, Posner \cite{P} achieved the first result on commuting derivations which claims that if $\Delta$ is a commuting derivation on a prime ring $\mathcal{R}$, then either $\mathcal{R}$ is commutative or $\Delta$ is zero.

In this paper, we aim to study commuting Jordan derivations on triangular rings. Different from Ponser's result, we show that every commuting Jordan derivation on triangular rings has to be zero. We also want to mention that our result of this paper does not require the triangular rings to be unital.

Throughout this paper, $\mathcal {A}$ and $\mathcal {B}$ are associative rings and $\mathcal {M}$ is an $(\mathcal {A}, \mathcal {B})$-bimodule.
Recall that a triangular ring $\mathfrak{T} = Tri(\mathcal{A}, \mathcal{M}, \mathcal{B})$ is a ring of the form
 $$\mathfrak{T} = Tri(\mathcal{A}, \mathcal{M}, \mathcal{B})=\left \{ \left [\begin{array}{cc}
 a & m\\
 0 & b
 \end{array}\right ]: a\in \mathcal {A}, m\in \mathcal {M}, b\in \mathcal {B}\right \}$$
 under the usual matrix addition and multiplication (see \cite{C}). Note that $\mathfrak{T} = Tri(\mathcal{A}, \mathcal{M}, \mathcal{B})$ is unital if and only if both $\mathcal{A}$ and $\mathcal{B}$ are unital. We remark
that there exist many triangular rings without unity. For example, if $\mathcal{A}$ is a ring
without unity then every upper triangular matrix ring over $\mathcal{A}$ does not contain unity.
\\
We set \begin{displaymath} \mathfrak
 {T}_{11}=\left \{ \left[ \begin{array}{cc}
 a & 0\\
 0 & 0
 \end{array}\right]: a\in \mathcal {A} \right \},
 \end{displaymath}
 \begin{displaymath}\ \ \ \ \mathfrak{T}_{12}=\left \{
 \left[ \begin{array}{cc}
 0 & m\\
 0 & 0
 \end{array}\right]: m\in \mathcal {M} \right \},
 \end{displaymath}
 and
 \begin{displaymath} \mathfrak{T}_{22}=\left \{
 \left[ \begin{array}{cc}
 0 & 0\\
 0 & b
 \end{array}\right]: b\in \mathcal {B} \right \}.
 \end{displaymath}
 Then we may write $\mathfrak{T}=\mathfrak{T}_{11}\oplus \mathfrak
 {T}_{12}\oplus \mathfrak{T}_{22}$ and every element $A\in\mathfrak
 {T}$ can be written as $A=A_{11}+A_{12}+A_{22}$, where $A_{ij}\in \mathfrak{T}_{ij}$, $i,j\in \{1,2\}$.

Let $\mathcal{A}$ and $\mathcal{B}$ be algebras. A left (resp. right) $\mathcal{A}$-module $\mathcal{M}$ is said to be left (resp. right) faithful if
$a=0$ is the only element in $\mathcal{A}$ satisfying $a \mathcal{M}=0$ (resp. $\mathcal{M} a=0$). An $(\mathcal{A}, \mathcal{B})$-bimodule
$\mathcal{M}$ is said to be faithful if $\mathcal{M}$ is both a faithful left $\mathcal{A}$-module and a faithful right $\mathcal{B}$-module
(see \cite{C} for more details).  A module $\mathcal{M}$ is said to be $n$-torsion free, where $n > 1$ is an integer, if for $x \in \mathcal{M}$, $n x = 0$ implies that $x = 0$.

Now, we state the main result of this paper. Let $\mathcal{A}$ and $\mathcal{B}$ be 2-torsion free rings such that each of them is either
semiprime or satisfies Condition (P) and let $\mathcal{M}$ be a 2-torsion free faithful $(\mathcal{A}, \mathcal{B})$-bimodule such that if $\mathcal {A}m=\{ 0\}$ (resp. $m\mathcal{B}=\{0\}$) for some $m\in \mathcal {M}$, then $m=0$. If $\Delta$ is a commuting Jordan derivation on the triangular ring $\mathfrak{T} = Tri(\mathcal{A}, \mathcal{M}, \mathcal{B})$, then $\Delta$ is zero.

As stated above, Posner proved in \cite[Lemma 3]{P} that if $\mathcal{R}$ is a prime ring and $d$ is a commuting derivation of $\mathcal{R}$, then $\mathcal{R}$ is commutative or $d$ is zero. As a corollary of the main theorem of this paper, we show that every commuting Jordan derivation on a 2-torsion free ring which either is semiprime or satisfies Condition (P)is identically zero under certain conditions.
Some other related results are also presented.

\section{Results and Proofs}

We begin our discussion with the following useful lemmas which we will use frequently to prove the main result of this paper:

\begin{lemma}\label{1}
Let $\mathcal{R}$ be a 2-torsion free ring and let $\Delta:\mathcal{R} \rightarrow \mathcal{R}$ be a commuting Jordan derivation, that is $\Delta(x^2) = \Delta(x)x + x \Delta(x)$ and $\Delta(x)x=x\Delta (x)$ for all $x \in \mathcal{R}$.  Then for any $x, y \in \mathcal{R}$, the following hold:
\begin{enumerate}
	\item   [(i)] $\Delta(xy + yx) = 2\big(x\Delta(y) + y \Delta(x)\big)$;
	\item   [(ii)] $\Delta(xy + yx) = 2\big(\Delta(x)y + \Delta(y)x\big);$
	\item   [(iii)] $\Delta(xyx) = 3 \Delta(x)yx + \Delta(y)x^2 - \Delta(x)xy$;
	\item   [(iv)] $\Delta(xyx) = x^2 \Delta(y) + 3 xy \Delta(x) - yx \Delta(x)$;
\end{enumerate}
\end{lemma}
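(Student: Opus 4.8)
The plan is to derive all four identities from linearizations of the two hypotheses; note first that, under the commuting assumption, the Jordan identity reads $\Delta(x^2)=2x\Delta(x)=2\Delta(x)x$. I would substitute $x\mapsto x+y$ in $\Delta(x^2)=2x\Delta(x)$ and then subtract off the identities $\Delta(x^2)=2x\Delta(x)$ and $\Delta(y^2)=2y\Delta(y)$; what remains is $\Delta(xy+yx)=2\big(x\Delta(y)+y\Delta(x)\big)$, which is (i). The same substitution in $\Delta(x^2)=2\Delta(x)x$ yields $\Delta(xy+yx)=2\big(\Delta(x)y+\Delta(y)x\big)$, which is (ii). Comparing (i) with (ii) and using that $\mathcal{R}$ is $2$-torsion free, I would also record the auxiliary relation
\[
x\Delta(y)+y\Delta(x)=\Delta(x)y+\Delta(y)x.\qquad(\star)
\]

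For (iii) and (iv) the idea is to evaluate $\Delta\big(x(xy+yx)+(xy+yx)x\big)=\Delta\big(x^{2}y+2xyx+yx^{2}\big)$ in two different ways. On one hand, apply (ii) (respectively (i)) to the pair $x$, $xy+yx$ and then expand $\Delta(xy+yx)$ once more via (ii) (respectively (i)), also using $\Delta(x^{2})=2\Delta(x)x=2x\Delta(x)$. On the other hand, write the same element as $\Delta(x^{2}y+yx^{2})+2\Delta(xyx)$ and evaluate $\Delta(x^{2}y+yx^{2})$ by applying (ii) (respectively (i)) to the pair $x^{2}$, $y$. Equating the two outcomes, solving for $2\Delta(xyx)$, dividing by $2$ (permissible because $\mathcal{R}$ is $2$-torsion free), and simplifying with the commuting relation $\Delta(x)x=x\Delta(x)$ (so that, for instance, $y\Delta(x)x=yx\Delta(x)$) produces (iii) when one works with (ii) throughout and (iv) when one works with (i). Alternatively, once one of (iii), (iv) is established, the other follows from it by repeated use of $(\star)$.

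I expect the only real work to be bookkeeping, not anything conceptual: one has to keep the factors in the correct left/right order every time the commuting identity is invoked, since $\mathcal{R}$ is not assumed commutative, and one has to check that each division by $2$ is justified by $2$-torsion freeness. The whole argument runs on nothing more than the repeated interplay of (i), (ii) with the commuting relation $\Delta(x)x=x\Delta(x)$.
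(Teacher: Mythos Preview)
Your proposal is correct and follows essentially the same approach as the paper: linearize $\Delta(x^2)=2\Delta(x)x$ (respectively $2x\Delta(x)$) to obtain (ii) (respectively (i)), then compute $\Delta\big(x(xy+yx)+(xy+yx)x\big)$ in two ways to extract (iii) (respectively (iv)). The paper simply cites Bre\v{s}ar--Vukman for (i) and (iv) rather than writing out the mirror computation, but the underlying argument is identical to yours; your remark about the auxiliary relation $(\star)$ and the commuting identity is accurate bookkeeping, though in practice neither is needed beyond the form $\Delta(x^2)=2\Delta(x)x=2x\Delta(x)$ if you choose the matching version at each step.
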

\begin{proof}
It is clear that $\Delta$ is both a left Jordan derivation and a right Jordan derivation. Equalities (i) and (iv) have been proved in \cite[Proposition 1.1]{B-V}. Similarly, Equalities (ii) and (iii) can be obtained for the right Jordan derivations, and in order to make
this paper self-contained, we prove them here. The Equality (ii) follows immediately from $\Delta(x^2) = 2\Delta(x)x$ by the linearization (i.e., substituting $x+y$ for $x$). Let us prove (iii). From (ii), we have
\begin{align*}
\Delta(x(xy + yx) + (xy + yx)x) & =  2 \Delta(x)(xy + yx) + 2 \Delta(xy + yx)x \\ & = 6 \Delta(x) yx + 2 \Delta(x) xy + 4 \Delta(y) x^2,
\end{align*}
for all $x, y \in \mathcal{R}$. On the other hand we have
\begin{align*}
\Delta(x(xy + yx) + (xy + yx)x) & = \Delta(x^2 y + y x^2) + 2 \Delta(xyx) \\ & = 2 \Delta(x^2)y + 2\Delta(y)x^2 + 2 \Delta(xyx) \\ & = 4 \Delta(x)xy + 2\Delta(y)x^2 + 2 \Delta(xyx).
\end{align*}
In comparison and using the assumption that $\mathcal{R}$ is a 2-torsion free ring, we get
\begin{align*}
\Delta(xyx) = 3 \Delta(x)yx  + \Delta(y)x^2 - \Delta(x)xy,
\end{align*}
for all $x, y \in \mathcal{R}$, as required.
\end{proof}

\begin{lemma}\label{1*} Let $\mathcal{R}$ be a 2-torsion free ring and let $x^2 a= 0$ for all $x \in \mathcal{R}$ and some $a \in Z(\mathcal{R})$. The following expressions hold:
\begin{enumerate}
	\item   [(i)] If $\mathcal{R}$ is semiprime, then $a = 0$.
\item [(ii)] If $\mathcal{R}$ satisfies Condition (P), then $a = 0$.\end{enumerate}
\end{lemma}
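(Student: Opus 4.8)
The plan is to leverage the centrality of $a$ to rewrite the hypothesis $x^2a=0$ in a form that the semiprimeness hypothesis, respectively Condition (P), disposes of immediately; since $a$ commutes with every element of $\mathcal R$, both parts come down to short manipulations of products.

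I would dispatch part (ii) first, as it is essentially instantaneous: for every $x\in\mathcal R$, centrality of $a$ gives $xax=x(ax)=x(xa)=x^2a=0$, so $xax=0$ for all $x$ and Condition (P) forces $a=0$. For part (i), I would start from $a\in Z(\mathcal R)\subseteq\mathcal R$ and substitute $x=a$ into the hypothesis to obtain $a^3=0$. (If one wants to see where $2$-torsion freeness enters: linearizing $x^2a=0$ by $x\mapsto x+y$ yields $(xy+yx)a=0$ for all $x,y$, and taking $y=a$ together with $ax=xa$ gives $2xa^2=0$, hence $xa^2=0$, i.e.\ $\mathcal R a^2=\{0\}$.) Then, using that $a^2$ is central, for every $r\in\mathcal R$ one has $a^2ra^2=ra^4=r(a\cdot a^3)=0$, so $a^2\mathcal Ra^2=\{0\}$ and semiprimeness yields $a^2=0$; applying semiprimeness a second time to $a\mathcal Ra$, where $ara=ra^2=0$ for all $r$, gives $a=0$.

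I do not expect a genuine obstacle here — the lemma is routine bookkeeping with central elements. The two points that require a little care are, first, that one must not invoke a unit element, since the triangular rings for which this lemma is intended need not be unital, which is why $a^2=0$ and $a=0$ are extracted from the two-sided relations $a^2\mathcal Ra^2=\{0\}$ and $a\mathcal Ra=\{0\}$ rather than by cancelling a $1$; and second, that the descent from $a^3=0$ to $a=0$ must be carried out in the two stages $a^3=0\Rightarrow a^2=0\Rightarrow a=0$, because a single application of semiprimeness only kills an element $b$ once $b\mathcal Rb$ has been shown to vanish.
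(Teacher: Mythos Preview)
Your proof is correct. Part (ii) is exactly the paper's argument. For part (i) your main line differs from the paper: the paper linearizes $x^2a=0$ to $(xy+yx)a=0$, sets $y=a$, uses centrality and $2$-torsion freeness to obtain $axa=0$ directly, and finishes with a single application of semiprimeness. Your route instead substitutes $x=a$ to get $a^3=0$, then applies semiprimeness twice ($a^2\mathcal R a^2=\{0\}\Rightarrow a^2=0$, then $a\mathcal R a=\{0\}\Rightarrow a=0$). Your argument has the small bonus that it never invokes the $2$-torsion free hypothesis in part (i); the paper's version is shorter in that it reaches $a\mathcal R a=\{0\}$ in one stroke. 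Amusingly, the linearization you tuck into the parenthetical is essentially the paper's own proof, just written as $xa^2=0$ rather than the equivalent $axa=0$.
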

\begin{proof} (i) Replacing $x$ by $x + y$ in the equation $x^2 a = 0$ and then using this equation, we get that $(xy + yx)a = 0$ for all $x, y \in \mathcal{R}$. Putting $y = a$ in the previous equation and using the assumption that $a \in Z(\mathcal{R})$, we obtain that $2 a x a = 0$ for all $x \in \mathcal{R}$. Since $\mathcal{R}$ is 2-torsion free, $a x a = 0$ for all $x \in \mathcal{R}$ and semiprimeness of $\mathcal{R}$ yields that $a= 0$, as required.\\
(ii) Since $x^2 a= 0$ for all $x \in \mathcal{R}$ and some $a \in Z(\mathcal{R})$, we deduce that $x a x = 0$ for all $x \in \mathcal{R}$. Condition (P) of $\mathcal{R}$ implies that $a =0$.
\end{proof}

\begin{lemma} \label {2*} \cite[Lemma 3]{V8} Let $\mathcal{R}$ be a semiprime ring and let $f : \mathcal{R} \rightarrow  \mathcal{R}$ be an additive mapping. If either $f (x)x = 0$ or $x f (x) = 0$ holds for all $x \in \mathcal{R}$, then $f = 0$.
\end{lemma}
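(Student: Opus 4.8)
Plan. I would work throughout with the decomposition $\mathfrak{T}=\mathfrak{T}_{11}\oplus\mathfrak{T}_{12}\oplus\mathfrak{T}_{22}$ and never invoke a unit. Writing, for $a\in\mathcal A$, $m\in\mathcal M$, $b\in\mathcal B$,
\[
\Delta\!\begin{pmatrix}a&0\\0&0\end{pmatrix}=\begin{pmatrix}\alpha(a)&\mu(a)\\0&\beta(a)\end{pmatrix},\quad
\Delta\!\begin{pmatrix}0&m\\0&0\end{pmatrix}=\begin{pmatrix}f(m)&g(m)\\0&h(m)\end{pmatrix},\quad
\Delta\!\begin{pmatrix}0&0\\0&b\end{pmatrix}=\begin{pmatrix}\alpha'(b)&\mu'(b)\\0&\beta'(b)\end{pmatrix},
\]
with all nine coordinate maps additive; by additivity the theorem amounts to showing each of them is zero. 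Feeding the three generators into $\Delta(X)X=X\Delta(X)$ and into the Jordan relation on $X^{2}$, I would read off that $\alpha$ and $\beta'$ are \emph{commuting Jordan derivations} of $\mathcal A$ and $\mathcal B$, together with the annihilation data $a\mu(a)=0$, $\mu'(b)b=0$, $f(m)m=0$, $mh(m)=0$. Comparing parts (i) and (ii) of Lemma~\ref{1} yields the master identity $x\Delta(y)+y\Delta(x)=\Delta(x)y+\Delta(y)x$; applied to $(x,y)=(\tilde a,\hat m)$ and $(\bar b,\hat m)$ (the notation $\tilde a,\hat m,\bar b$ denoting the evident elements of $\mathfrak{T}_{11},\mathfrak{T}_{12},\mathfrak{T}_{22}$) it gives $f(m)\in Z(\mathcal A)$, $h(m)\in Z(\mathcal B)$ and the linking relations $a\,g(m)=\alpha(a)m-m\beta(a)$ and $m\beta'(b)=\alpha'(b)m+g(m)b$. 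Expanding the Jordan relation on $(\tilde a+\bar b)^{2}$ then produces $\mu(a)b+a\mu'(b)=0$ together with the two anticommuting relations $\alpha'(b)a+a\alpha'(b)=0$ in $\mathcal A$ and $\beta(a)b+b\beta(a)=0$ in $\mathcal B$.

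Next I would dispose of the corner‑crossing maps $\alpha'$ and $\beta$. Setting $c=\alpha'(b)$, from $ca+ac=0$ (all $a$) I obtain successively $a^{2}c=0$ and $aca=0$; consequently $c^{2}$ is central and $a^{2}c^{2}=0$, so Lemma~\ref{1*} forces $c^{2}=0$, whence $cac=-c^{2}a=0$ and semiprimeness gives $c=0$, while in the Condition~(P) case $aca=0$ gives $c=0$ at once. Thus $\alpha'=0$, and symmetrically $\beta=0$ (using the hypothesis on $\mathcal B$). With $\beta=\alpha'=0$ the linking relations simplify to $a\,g(m)=\alpha(a)m$ and $g(m)b=m\beta'(b)$. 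Substituting these into the $(1,2)$‑coordinate of $\Delta(X)X=X\Delta(X)$ for a general $X$, and using $\mu(a)b+a\mu'(b)=0$, everything collapses to $2\,a\mu'(b)=0$; $2$‑torsion freeness of $\mathcal M$ gives $a\mu'(b)=0$, i.e. $\mathcal A\mu'(b)=\{0\}$, so the hypothesis "$\mathcal A m=\{0\}\Rightarrow m=0$" yields $\mu'=0$, and then $\mu(a)\mathcal B=\{0\}$ yields $\mu=0$.

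At this point $\Delta$ is block‑preserving, and it remains to kill $f,h,g,\alpha,\beta'$. Applying Lemma~\ref{1}(iii) and (iv) to the (zero) products $\tilde a\hat m\tilde a$ and $\bar b\hat m\bar b$ I would extract $f(m)a^{2}=0$ and $h(m)b^{2}=0$; since $f(m),h(m)$ are central, Lemma~\ref{1*} gives $f=h=0$. The same two computations also produce $\bigl(\alpha(a)a\bigr)m=0$ and $m\bigl(b\beta'(b)\bigr)=0$ for all $m$, so faithfulness of $\mathcal M$ delivers the crucial one‑sided identities $\alpha(a)a=0$ and $\beta'(b)b=0$.

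The final and most delicate step is to pass from $\alpha(a)a=0$ to $\alpha=0$ (and symmetrically $\beta'=0$). When $\mathcal A$ is semiprime this is precisely Lemma~\ref{2*}; the obstacle is the Condition~(P) case, where Lemma~\ref{2*} is unavailable and I must manufacture a centred relation. Here I would linearize the companion identity $a\alpha(a)=0$ to $a\alpha(b)+b\alpha(a)=0$ and combine it with $\alpha(a)a=0$ to obtain $a\,\alpha(b)\,a=-b\,\alpha(a)a=0$ for all $a$; thus $x\,\alpha(b)\,x=0$ for every $x$, and Condition~(P) forces $\alpha(b)=0$. Hence $\alpha=0$ and likewise $\beta'=0$. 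Finally, $a\,g(m)=\alpha(a)m=0$ gives $\mathcal A g(m)=\{0\}$, so the hypothesis "$\mathcal A m=\{0\}\Rightarrow m=0$" yields $g=0$. All nine coordinate maps vanish, and therefore $\Delta=0$.
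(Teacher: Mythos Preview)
Your proposal does not prove the stated lemma at all. Lemma~\ref{2*} is the short, self-contained fact (quoted from Vukman~\cite{V8}) that in a semiprime ring an additive map $f$ with $f(x)x=0$ for all $x$ must vanish identically. The paper gives no proof of this lemma; it simply cites the reference. What you have written is instead a full proof plan for \emph{Theorem~\ref{2}}, the main result on commuting Jordan derivations of the triangular ring $\mathfrak{T}=\mathrm{Tri}(\mathcal A,\mathcal M,\mathcal B)$: you introduce nine coordinate maps, use Lemma~\ref{1} and Lemma~\ref{1*} to kill the off-block pieces, and then invoke exactly Lemma~\ref{2*} (in the semiprime case) to finish off $\alpha$ and $\beta'$. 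In other words, your argument \emph{uses} Lemma~\ref{2*} rather than proving it.

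If the intent was to supply a proof of Lemma~\ref{2*} itself, the standard route is: linearize $f(x)x=0$ to $f(x)y+f(y)x=0$, so $f(x)y=-f(y)x$; right-multiply by $f(x)$ to get $f(x)yf(x)=-f(y)xf(x)$; now replace $y$ by $xy$ in the linearized identity and use $f(x)x=0$ to obtain $f(x)xy=-f(xy)x$, hence $f(xy)x=0$, which feeds back to give $f(x)yf(x)=0$ for all $y$, and semiprimeness yields $f(x)=0$. None of this appears in your write-up. If, on the other hand, you were actually aiming at Theorem~\ref{2}, your plan is broadly sound and runs parallel to the paper's six-step proof, but that is a different statement from the one you were asked to address.
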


We are now in a position to prove our main result.

\begin{theorem} \label{2} Let $\mathcal{A}$ and $\mathcal{B}$ be 2-torsion free rings such that each of them is either
semiprime or satisfies Condition (P) and let $\mathcal{M}$ be a 2-torsion free faithful $(\mathcal{A}, \mathcal{B})$-bimodule such that if $m\in \mathcal {M}$ and  $\mathcal {A}m=\{ 0\}$ (resp. $m\mathcal{B}=\{0\}$), then $m=0$.
If $\Delta$ is a commuting Jordan derivation on the triangular ring $\mathfrak{T} = Tri(\mathcal{A}, \mathcal{M}, \mathcal{B})$,
then $\Delta$ is zero.
\end{theorem}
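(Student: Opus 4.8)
The plan is to expand $\Delta$ entrywise relative to the decomposition $\mathfrak{T}=\mathfrak{T}_{11}\oplus\mathfrak{T}_{12}\oplus\mathfrak{T}_{22}$ and to annihilate those entries one block at a time. Since $\mathcal{A},\mathcal{B},\mathcal{M}$ are $2$-torsion free, so is $\mathfrak{T}$, hence Lemma~\ref{1} applies to $\Delta$. Identifying $a\in\mathcal{A}$, $m\in\mathcal{M}$, $b\in\mathcal{B}$ with the corresponding elements of $\mathfrak{T}_{11},\mathfrak{T}_{12},\mathfrak{T}_{22}$, write
\[
\Delta\!\begin{bmatrix} a & 0\\ 0 & 0\end{bmatrix}=\begin{bmatrix} f_1(a) & g_1(a)\\ 0 & h_1(a)\end{bmatrix},
\]
and record $\Delta$ on $\mathfrak{T}_{12}$ and $\mathfrak{T}_{22}$ likewise by additive maps $f_2,g_2,h_2$ and $f_3,g_3,h_3$; the goal is that all nine vanish. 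First I would show the diagonal blocks are preserved: applying Lemma~\ref{1}(i) and (ii) to $A\in\mathfrak{T}_{11}$, $B\in\mathfrak{T}_{22}$ with $AB=BA=0$ gives $A\Delta(B)+B\Delta(A)=0=\Delta(A)B+\Delta(B)A$, and comparing entries yields $af_3(b)=f_3(b)a=0$, $bh_1(a)=h_1(a)b=0$, $ag_3(b)=0$, $g_1(a)b=0$ for all $a,b$. Since $\mathcal{A}$ (resp.\ $\mathcal{B}$) is semiprime or satisfies Condition~(P), the first (resp.\ second) batch forces $f_3=0$ (resp.\ $h_1=0$); the hypotheses ``$\mathcal{A}n=0\Rightarrow n=0$'' and ``$n\mathcal{B}=0\Rightarrow n=0$'' on $\mathcal{M}$ force $g_3=0$ and $g_1=0$. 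Thus $\Delta(\mathfrak{T}_{11})\subseteq\mathfrak{T}_{11}$, $\Delta(\mathfrak{T}_{22})\subseteq\mathfrak{T}_{22}$, and reading off $\Delta(A^2)=2\Delta(A)A=2A\Delta(A)$ and $[\Delta(A),A]=0$ shows $f_1$ is a commuting Jordan derivation of $\mathcal{A}$ and $h_3$ one of $\mathcal{B}$.

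Next I would show the $(1,2)$ block is preserved. From $M^2=0$ one gets $f_2(m)m=0$ and $mh_2(m)=0$. Feeding the pairs $(A,M)$ and $(B,M)$ into Lemma~\ref{1}(i),(ii) and using Step 1 yields $g_2(am)=2ag_2(m)=2f_1(a)m$, $f_2(am)=2af_2(m)=2f_2(m)a$ (so $[f_2(m),a]=0$), together with the right-hand analogues $g_2(mb)=2g_2(m)b=2mh_3(b)$ and $h_2(mb)=2h_2(m)b=2bh_2(m)$ (so $[h_2(m),b]=0$). Now linearise $f_2(m)m=0$ to $f_2(m_1)m_2+f_2(m_2)m_1=0$, replace $m_1$ by $am_1$, and combine with $f_2(am)=2af_2(m)$ and $[f_2(m),a]=0$ to obtain first $f_2(m_2)am_1=0$, then $af_2(m_2)m_1=0$ for all $a$; the hypothesis ``$\mathcal{A}n=0\Rightarrow n=0$'' gives $f_2(m_2)m_1=0$, and left faithfulness of $\mathcal{M}$ gives $f_2=0$. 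The mirror computation, using $h_2(mb)=2h_2(m)b$, $[h_2(m),b]=0$, ``$n\mathcal{B}=0\Rightarrow n=0$'' and right faithfulness, gives $h_2=0$. Now $\Delta$ is ``block diagonal'' and the coupling identities collapse to
\[
f_1(a)m=ag_2(m),\qquad mh_3(b)=g_2(m)b,\qquad g_2(am)=2ag_2(m),\qquad g_2(mb)=2g_2(m)b.
\]

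Finally I would kill the diagonal maps. Comparing $g_2\big(a(am)\big)=4a^2g_2(m)$ with $g_2\big((a^2)m\big)=2a^2g_2(m)$ forces $2a^2g_2(m)=0$, hence $a^2g_2(m)=0$; then $\big(af_1(a)\big)m=a\big(ag_2(m)\big)=0$ for all $m$, so left faithfulness gives $af_1(a)=0$, and commutativity of $f_1$ gives $f_1(a)a=0$. If $\mathcal{A}$ is semiprime, Lemma~\ref{2*} gives $f_1=0$ at once; if $\mathcal{A}$ satisfies Condition~(P), linearising $af_1(a)=0$ to $xf_1(b)+bf_1(x)=0$ and right-multiplying by $x$ gives $xf_1(b)x=-bf_1(x)x=0$ for all $x$, whence $f_1=0$ by Condition~(P). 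Symmetrically --- first deriving $g_2(m)b^2=0$, then $h_3(b)b=0$ via right faithfulness --- one gets $h_3=0$. Then $ag_2(m)=f_1(a)m=0$ for all $a$, so $g_2=0$ by ``$\mathcal{A}n=0\Rightarrow n=0$''. All nine component maps vanish, so $\Delta=0$.

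I expect Steps 2 and 3 to be the real work. That is where the $2$-torsion-freeness of $\mathcal{M}$, the left and right faithfulness of $\mathcal{M}$, and the two ``$\mathcal{A}n=0\Rightarrow n=0$''-type hypotheses all get used, and where the linearisation bookkeeping has to be done carefully (in particular one must resist the temptation to ``divide by $3$'' and instead arrange cancellations down to a factor $2$). The Condition~(P) subcase of Step 3 needs the separate short argument indicated above, since Lemma~\ref{2*} is only available for semiprime rings; getting that argument to run is the one place where the two alternative hypotheses on $\mathcal{A}$ and $\mathcal{B}$ genuinely have to be treated apart.
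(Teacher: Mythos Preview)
Your argument is correct and organised a little differently from the paper's. Both proofs begin the same way, using Lemma~\ref{1}(i),(ii) on the pair $(A_{11},B_{22})$ to kill the off-block components $g_1,h_1,f_3,g_3$; your Step~1 in fact does in one stroke what the paper splits into its Steps~1 and~3. The genuine difference is in the middle. To annihilate $f_2$ and $f_1$ the paper invokes the $xyx$-identities Lemma~\ref{1}(iii),(iv): computing $\Delta(A_{11}A_{12}A_{11})=0$ it reads off $f_2(m)a^{2}=0$ and $f_1(a)am=0$ directly, and dually uses $\Delta(A_{22}A_{12}A_{22})=0$ for $h_2,h_3$. You bypass (iii),(iv) altogether: for $f_2$ you linearise $f_2(m)m=0$ (coming from $M^{2}=0$) and feed in $am_1$ together with $f_2(am)=2af_2(m)$ and $f_2(m)\in Z(\mathcal{A})$; for $f_1$ you exploit the associativity $g_2\bigl(a(am)\bigr)=g_2\bigl(a^{2}m\bigr)$ to squeeze out $a^{2}g_2(m)=0$, whence $af_1(a)m=0$. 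Your route is marginally more elementary in that it only ever uses the linearised identities (i),(ii); the paper's $xyx$ route is more direct once those identities are available and mirrors the classical Herstein--Bre\v{s}ar machinery for Jordan derivations. Both proofs deploy faithfulness, the ``$\mathcal{A}m=0\Rightarrow m=0$'' and ``$m\mathcal{B}=0\Rightarrow m=0$'' hypotheses, $2$-torsion freeness, and the semiprime/Condition~(P) dichotomy in exactly the same places, and your separate Condition~(P) endgame for $f_1$ (linearise $af_1(a)=0$, right-multiply by $x$) is essentially the paper's argument for $F_{22}$ in its Step~4.
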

\begin{proof} Without loss of generality, we assume that $\mathcal{A}$ is  a semiprime ring and $\mathcal{B}$ satisfies Condition (P). For the cases that  both  rings $\mathcal{A}$ and $\mathcal{B}$ are semiprime or both satisfy Condition (P) or ring $\mathcal {B}$ is semiprime and ring $\mathcal {A}$ satisfies Condition (P), we leave it to the interested reader. The proof is divided into the following six steps. \\

\textbf{Step 1}.  For every $A_{11}=\left [\begin{array}{cc}
a & 0\\
0 & 0
\end{array}\right ] \in \mathfrak{T}_{11}$ and $A_{22}=\left [\begin{array}{cc}
0 & 0\\
0 & b
\end{array}\right ]\in \mathfrak{T}_{22}$, we have
\begin{enumerate}
\item [(i)] $[\Delta(A_{11})]_{22} = 0$;
\item [(ii)] $[\Delta(A_{11})]_{12} = 0$;
\item [(iii)] $[\Delta(B_{22})]_{11} = 0$. \\
\end{enumerate}
 Notice that $A_{11}B_{22} = B_{22}A_{11} = 0$. It follows from Lemma \ref{1} (i) that
\begin{align*}
0 & = \Delta(A_{11}B_{22} + B_{22}A_{11}) = 2 \big(\Delta(A_{11})B_{22} + \Delta(B_{22})A_{11}\big) \\ & = 2 \left [\begin{array}{cc}
	[\Delta(A_{11})]_{11} & [\Delta(A_{11})]_{12}\\
	0 & [\Delta(A_{11})]_{22}
	\end{array}\right ]\left [\begin{array}{cc}
	0 & 0\\
	0 & b
	\end{array}\right ] + 2 \left [\begin{array}{cc}
	[\Delta(B_{22})]_{11} & [\Delta(B_{22})]_{12}\\
	0 & [\Delta(B_{22})]_{22}
	\end{array}\right ] \left [\begin{array}{cc}
	a & 0\\
	0 & 0
	\end{array}\right ]\\ & = \left [\begin{array}{cc}
	0 & 2[\Delta(A_{11})]_{12} b\\
	0 & 2[\Delta(A_{11})]_{22} b
	\end{array}\right ] + \left [\begin{array}{cc}
	2[\Delta(B_{22})]_{11} a & 0\\
	0 & 0
	\end{array}\right ] \\ & = \left [\begin{array}{cc}
	2[\Delta(B_{22})]_{11} a & 2[\Delta(A_{11})]_{12} b\\
	0 & 2[\Delta(A_{11})]_{22} b
	\end{array}\right ],
\end{align*}
which implies that
$$\left [\begin{array}{cc}
	2[\Delta(B_{22})]_{11} a & 2[\Delta(A_{11})]_{12} b\\
	0 & 2[\Delta(A_{11})]_{22} b
	\end{array}\right ] = 0.$$

It follows that $[\Delta(B_{22})]_{11} a=0$ for any $a\in \mathcal {A}$,  $[\Delta(A_{11})]_{12} b=0$ and $[\Delta(A_{11})]_{22} b=0$ for any $b\in \mathcal {B}$. Since ring $\mathcal {A}$ is semiprime and ring $\mathcal {B}$ satisfies Condition (P), we have $[\Delta(B_{22})]_{11} = 0$ and $[\Delta(A_{11})]_{22} = 0$. Using the fact that $m=0$ is the only element in $\mathcal {M}$ such that $m\mathcal {B}=\{ 0\}$, we see that $[\Delta(A_{11})]_{12} = 0$.  \\

\textbf{Step 2}. For every $A_{11}=\left [\begin{array}{cc}
a & 0\\
0 & 0
\end{array}\right ] \in \mathfrak{T}_{11}$ and $A_{12}=\left [\begin{array}{cc}
0 & m \\
0 & 0
\end{array}\right ] \in \mathfrak{T}_{12}$, we have
\begin{enumerate}
\item [(i)] $[\Delta(A_{12})]_{11} = 0$;
\item [(ii)] $[\Delta(A_{11})]_{11} = 0$. \\
\end{enumerate}

In order to show $[\Delta(A_{12})]_{11} = 0$, we first show that  $[\Delta(A_{12})]_{11} \in Z(\mathcal{A})$. According to Step 1, $[\Delta(A_{11})]_{22} = 0$ which implies that $A_{12}\Delta(A_{11}) = 0$. Hence, we have
\begin{align*}
\Delta(A_{11}A_{12}) & = \Delta(A_{11}A_{12} + A_{12}A_{11}) \\ & = 2 \big(A_{11}\Delta(A_{12}) + A_{12}\Delta(A_{11})\big) \\ & = 2 A_{11}\Delta(A_{12}) \\ & = 2 \left [\begin{array}{cc}
	 a & 0\\
	0 & 0
	\end{array}\right ]\left [\begin{array}{cc}
	[\Delta(A_{12})]_{11}& [\Delta(A_{12})]_{12} \\
	0 & [\Delta(A_{12})]_{22}
	\end{array}\right ] \\ & =  \left [\begin{array}{cc}
	2a[\Delta(A_{12})]_{11}& 2a[\Delta(A_{12})]_{12} \\
	0 & 0
	\end{array}\right ],
\end{align*}
which means that
\begin{align}
\Delta(A_{11}A_{12}) = \left [\begin{array}{cc}
	2a[\Delta(A_{12})]_{11}& 2a[\Delta(A_{12})]_{12} \\
	0 & 0
	\end{array}\right ]
\end{align}
On the other hand, by  Step 1, we have
\begin{align*}
\Delta(A_{11}A_{12}) & = \Delta(A_{11}A_{12} + A_{12}A_{11}) \\ & = 2 \big(\Delta(A_{11})A_{12} + \Delta(A_{12})A_{11}\big) \\ & =
\left [\begin{array}{cc}
	2[\Delta(A_{11})]_{11}& 0 \\
	0 & 0
	\end{array}\right ]\left [\begin{array}{cc}
	0 & m \\
	0 & 0
	\end{array}\right ] + \left [\begin{array}{cc}
	2[\Delta(A_{12})]_{11}& 2[\Delta(A_{12})]_{12} \\
	0 & 2[\Delta(A_{12})]_{22}
	\end{array}\right ]\left [\begin{array}{cc}
	a & 0 \\
	0 & 0
	\end{array}\right ] \\ & = \left [\begin{array}{cc}
	2[\Delta(A_{12})]_{11}a & 2[\Delta(A_{11})]_{11}m \\
	0 & 0
	\end{array}\right ],
\end{align*}
which means that
\begin{align}
\Delta(A_{11}A_{12}) =  \left [\begin{array}{cc}
	2[\Delta(A_{12})]_{11}a & 2[\Delta(A_{11})]_{11}m \\
	0 & 0
	\end{array}\right ].
\end{align}

Comparing equalities (2.1) and (2.2) and using the assumption that $\mathcal{A}$ is a 2-torsion free ring, we get that $[\Delta(A_{12})]_{11} \in Z(\mathcal{A})$.

 Now we prove that $[\Delta(A_{12})]_{11} = 0 = [\Delta(A_{11})]_{11}$. Using Lemma \ref{1} (iii), we have
\begin{align*}
0 & = \Delta(A_{11}A_{12}A_{11}) = 3 \Delta(A_{11})A_{12}A_{11} + \Delta(A_{12})A_{11}^2 - \Delta(A_{11})A_{11}A_{12} \\ & = \left [\begin{array}{cc}
	[\Delta(A_{12})]_{11} & [\Delta(A_{12})]_{12} \\
	0 & [\Delta(A_{12})]_{22}
	\end{array}\right ]\left [\begin{array}{cc}
	a^2 & 0 \\
	0 & 0
	\end{array}\right ] - \left [\begin{array}{cc}
	[\Delta(A_{11})]_{11} & 0 \\
	0 & 0
	\end{array}\right ]\left [\begin{array}{cc}
	0 & am \\
	0 & 0
	\end{array}\right ] \\ & = \left [\begin{array}{cc}
	[\Delta(A_{12})]_{11}a^2 & -[\Delta(A_{11})]_{11} am \\
	0 & 0
	\end{array}\right ],
\end{align*}
which means that
\begin{align*}
\left [\begin{array}{cc}
	[\Delta(A_{12})]_{11}a^2 & -[\Delta(A_{11})]_{11} am \\
	0 & 0
	\end{array}\right ] = 0.
\end{align*}
Therefore, we have
\begin{align}
& [\Delta(A_{12})]_{11} a^2 = 0, \\ & [\Delta(A_{11})]_{11} am = 0,
\end{align}
for all $a \in \mathcal{A}$ and $m \in \mathcal{M}$. Since $[\Delta(A_{12})]_{11} \in Z(\mathcal{A})$,  $[\Delta(A_{12})]_{11} a^2 = 0$  for all $a \in \mathcal{A}$ implies that $[\Delta(A_{12})]_{11} =0$ by  Lemma \ref{1*} (i).


It follows from equality (2.4) that  $ [\Delta(A_{11})]_{11}a = 0$ for each $a\in \mathcal {A}$  since $\mathcal{M}$ is a faithful left $\mathcal{A}$-module.   We now define a mapping $F_{11}:\mathcal{A} \rightarrow \mathcal{A}$ by  $F_{11}(a) = [\Delta(A_{11})]_{11}$, where $A_{11} = \left [\begin{array}{cc}
	a & 0 \\
	0 & 0
	\end{array}\right ]$. Obviously, mapping $F_{11}$
is additive.  Thus, we have $F_{11}(a)a = 0$ for all $a \in \mathcal{A}$. It follows from Lemma \ref{2*} that $F_{11}(a) = [\Delta(A_{11})]_{11} = 0$ for all $A_{11} \in \mathfrak{T}_{11}$, as desired.\\

\textbf{Step 3}. $[\Delta(A_{22})]_{12} = 0$ for all $A_{22} \in \mathfrak{T}_{22}$. \\

  Obviously, $A_{11}A_{22} = 0 = A_{22}A_{11}$ for all $A_{11} \in \mathfrak{T}_{11}$ and $A_{22} \in \mathfrak{T}_{22}$. Applying Lemma \ref{1} (i) and    Step 1  (i) and (iii), we have
\begin{align*}
0 & = \Delta(A_{11}A_{22} + A_{22}A_{11}) \\ & = 2 \big(A_{11}\Delta(A_{22}) + A_{22}\Delta(A_{11})\big) \\ & = 2 \left [\begin{array}{cc}
	a & 0 \\
	0 & 0
	\end{array}\right ]\left [\begin{array}{cc}
	0 & [\Delta(A_{22})]_{12} \\
	0 & [\Delta(A_{22})]_{22}
	\end{array}\right ] \\ & = \left [\begin{array}{cc}
	0 & 2 a [\Delta(A_{22})]_{12} \\
	0 & 0
	\end{array}\right ],
\end{align*}
which yields that $2 a [\Delta(A_{22})]_{12} = 0$ for all $a \in \mathcal{A}$. Since $\mathcal{M}$ is a 2-torsion free ring, $ a [\Delta(A_{22})]_{12} = 0$ for all $a \in \mathcal{A}$ and so $[\Delta(A_{22})]_{12} = 0$ for all $A_{22} \in \mathfrak{T}_{22}$, as desired.\\

\textbf{Step 4}. For every $A_{12}=\left [\begin{array}{cc}
0 & m \\
0 & 0
\end{array}\right ] \in \mathfrak{T}_{12}$ and $A_{22}=\left [\begin{array}{cc}
0 & 0 \\
0 & b
\end{array}\right ] \in \mathfrak{T}_{22}$, we have
\begin{enumerate}
\item [(i)] $[\Delta(A_{12})]_{22} = 0$;
\item [(ii)] $[\Delta(A_{22})]_{22} = 0$.\\
\end{enumerate}

We first show that $[\Delta(A_{12})]_{22} \in Z(\mathcal{B})$. On one hand, using Step 1 (iii),  Step 2 (i) and Step 3, we have the following expressions.
\begin{align*}
\Delta(A_{12}A_{22}) & = \Delta(A_{22}A_{12} + A_{12}A_{22}) \\ & = 2 \big(\Delta(A_{22})A_{12} + \Delta(A_{12})A_{22}) \\ & = 2 \left [\begin{array}{cc}
	0 & 0 \\
	0 & [\Delta(A_{22})]_{22}
	\end{array}\right ]\left [\begin{array}{cc}
	0 & m \\
	0 & 0
	\end{array}\right ] +  2 \left [\begin{array}{cc}
	0 & [\Delta(A_{12})]_{12} \\
	0 & [\Delta(A_{12})]_{22}
	\end{array}\right ]\left [\begin{array}{cc}
	0 & 0 \\
	0 & b
	\end{array}\right ] \\ & = \left [\begin{array}{cc}
	0 & 2[\Delta(A_{12})]_{12}b \\
	0 & 2[\Delta(A_{12})]_{22}b
	\end{array}\right ],
\end{align*}
that is
\begin{align}
\Delta(A_{12}A_{22}) = \left [\begin{array}{cc}
	0 & 2[\Delta(A_{12})]_{12}b \\
	0 & 2[\Delta(A_{12})]_{22}b
	\end{array}\right ]
\end{align}
for all $b \in \mathcal{B}$. On the other hand, using Lemma \ref{1} (i),  Step 1 (iii), Step 2 (i) and Step 3, we get that
\begin{align*}
\Delta(A_{12}A_{22}) & = \Delta(A_{22}A_{12} + A_{12}A_{22}) \\ & = 2 \big(A_{22}\Delta(A_{12}) + A_{12}\Delta(A_{22})\big) \\ & = 2 \left [\begin{array}{cc}
	0 & 0 \\
	0 & b
	\end{array}\right ]\left [\begin{array}{cc}
	0 & [\Delta(A_{12})]_{12} \\
	0 & [\Delta(A_{12})]_{22}
	\end{array}\right ] +  2 \left [\begin{array}{cc}
	0 & m \\
	0 & 0
	\end{array}\right ]\left [\begin{array}{cc}
	0 & 0 \\
	0 & [\Delta(A_{22})]_{22}
	\end{array}\right ] \\ & = \left [\begin{array}{cc}
	0 & 2m[\Delta(A_{22})]_{22}\\
	0 & 2b[\Delta(A_{12})]_{22}
	\end{array}\right ],
\end{align*}
i.e.,
\begin{align}
\Delta(A_{12}A_{22}) = \Big[\begin{array}{cc}
	0 & 2m[\Delta(A_{22})]_{22}\\
	0 & 2b[\Delta(A_{12})]_{22}
	\end{array}\Big]
\end{align}
for all $b \in \mathcal{B}$. Comparing equalities (2.5) and (2.6), we obtain that $[\Delta(A_{12})]_{22} \in Z(\mathcal{B})$ for all $A_{12} \in \mathfrak{T}_{12}$. Using Lemma \ref{1} (iv), Step 1 (iii), Step 2 (i) and Step 3, we have
\begin{align*}
0 & = \Delta(A_{22}A_{12}A_{22}) = A_{22}^{2} \Delta(A_{12}) + 3 A_{22}A_{12} \Delta(A_{22}) - A_{12}A_{22}\Delta(A_{22}) \\ & = \left[\begin{array}{cc}
	0 & 0\\
	0 & b^2
	\end{array}\right]\left[\begin{array}{cc}
	0 & [\Delta(A_{12})]_{12}\\
	0 & [\Delta(A_{12})]_{22}
	\end{array}\right] - \left[\begin{array}{cc}
	0 & m\\
	0 & 0
	\end{array}\right]\left[\begin{array}{cc}
	0 & 0\\
	0 & b
	\end{array}\right]\left[\begin{array}{cc}
	0 & 0\\
	0 & [\Delta(A_{22})]_{22}
	\end{array}\right] \\ & = \left[\begin{array}{cc}
	0 & -mb[\Delta(A_{22})]_{22}\\
	0 & b^2[\Delta(A_{12})]_{22}
	\end{array}\right],
\end{align*}
which yields that
\begin{align}
& b^2[\Delta(A_{12})]_{22} = 0, \ \ \ \ \ \ \ \ \ \ \ \  (b \in \mathcal{B}) \\ & mb[\Delta(A_{22})]_{22} = 0, \ \ \ \ \ \ \ \ \ \ (b \in \mathcal{B}, \ m \in \mathcal{M}).
\end{align}

Note that  $[\Delta(A_{12})]_{22} \in Z(\mathcal{B})$ for all $A_{12} \in \mathfrak{T}_{12}$. This fact along with equality (2.7) imply that $b[\Delta(A_{12})]_{22} b= 0$ for all $b \in \mathcal{B}$. Since $\mathcal{B}$ satisfies Condition (P), we get that $[\Delta(A_{12})]_{22}= 0$ for all $A_{12} \in \mathfrak{T}_{12}$. Also, it follows from equality (2.8) that $\mathcal{M} b[\Delta(A_{22})]_{22} = \{0\}$ for all $A_{22} = \left[\begin{array}{cc}
	0 & 0\\
	0 & b
	\end{array}\right] \in \mathfrak{T}_{22}$. Since $\mathcal{M}$ is a faithful right $\mathcal{B}$-module, we get that
\begin{align}
b[\Delta(A_{22})]_{22} = 0
\end{align}
for all $A_{22} = \left[\begin{array}{cc}
	0 & 0\\
	0 & b
	\end{array}\right] \in \mathfrak{T}_{22}.$
It is clear that  mapping $F_{22}: \mathcal{B} \rightarrow \mathcal{B}$ defined by $F_{22}(b) = [\Delta(A_{22})]_{22}$ is additive, where
$A_{22} = \left[\begin{array}{cc}
	0 & 0\\
	0 & b
	\end{array}\right]$. It follows from equality (2.9) that

\begin{align}
b F_{22}(b) = 0, \ \ \ \ \ \ \ \ \ (b \in \mathcal{B}).
\end{align}
Since $\Delta$ is a commuting map, one can verifies that $F_{22}(b)b = b F_{22}(b)$ for any $b \in \mathcal{B}$. Therefore,
\begin{align}
F_{22}(b)b = b F_{22}(b)=0  \ \ \ \ \ \ \ \ \ \ \ (b \in \mathcal{B}).
\end{align}

Replacing $b$ by $b_1 + b_2$ in equality (2.10) and then using equality (2.10), we have

\begin{align*}
b_1 F_{22}(b_2) + b_2 F_{22}(b_1) = 0 \ \ \ \ \ \ \ \ \ \ (b_1, b_2 \in \mathcal{B}).
\end{align*}

Multiplying the previous equality from right  by $b_1$ and then using identity  (2.11), we get that
\begin{align}
b_1 F_{22}(b_2)b_1 =0, \ \ \ \ \ \ \ \ \ \ \ \ \ \ \ \ \ (b_1, b_2 \in \mathcal{B}).
\end{align}
Equation (2.12) and  the assumption that  ring $\mathcal{B}$ satisfies Condition (P) imply that $F_{22}(b) = 0$ for all $b \in \mathcal{B}$. This means that $[\Delta(A_{22})]_{22} = 0$ for all $A_{22} \in \mathfrak{T}_{22}$. \\

\textbf{Step 5}. $[\Delta(A_{12})]_{12} = 0$ for every $A_{12} \in \mathfrak{T}_{12}$. \\

  On one hand, using Lemma \ref{1} (i), Step 1 (i), Step 2 (i) and Step 4 (i), for any $A_{11}  = \left[\begin{array}{cc}
  a & 0\\
  0 & 0
  \end{array}\right] \in \mathfrak{T}_{11}$ and any $A_{12} \in \mathfrak{T}_{12}$,  we have
\begin{align*}
\Delta(A_{11} A_{12}) & = \Delta(A_{11} A_{12} + A_{12} A_{11}) \\ & = 2 \big(A_{11} \Delta(A_{12}) + A_{12} \Delta(A_{11})\big) \\ & = \left[\begin{array}{cc}
	2a & 0\\
	0 & 0
	\end{array}\right]\left[\begin{array}{cc}
	0 & [\Delta(A_{12})]_{12}\\
	0 & 0
	\end{array}\right] \\ & =   \left[\begin{array}{cc}
	0 & 2a[\Delta(A_{12})]_{12}\\
	0 & 0
	\end{array}\right],
\end{align*}
which means that
\begin{align}
\Delta(A_{11} A_{12}) =   \left[\begin{array}{cc}
	0 & 2a[\Delta(A_{12})]_{12}\\
	0 & 0
	\end{array}\right],
\end{align}
for all $A_{12} \in \mathfrak{T}_{12}$ and all $A_{11} = \left[\begin{array}{cc}
	a & 0\\
	0 & 0
	\end{array}\right] \in \mathfrak{T}_{11}$. On the other hand, using Lemma \ref{1} (ii) and  Step 2, we have
\begin{align*}
\Delta(A_{11} A_{12}) & = \Delta(A_{11} A_{12} + A_{12} A_{11}) \\ & = 2 \big(\Delta(A_{11})A_{12} + \Delta(A_{12})A_{11}\big) \\ & = 0,
\end{align*}
which means that
\begin{align}
\Delta(A_{11} A_{12})  = 0, \ \ \ \ \ \ \ \ \ \ \ \ \ \ \ \ \ \ \ (A_{11} \in \mathfrak{T}_{11}, \ A_{12} \in \mathfrak{T}_{12}).
\end{align}
Comparing identities (2.13) and (2.14) and using the assumption that $\mathcal{M}$ is 2-torsion free, we deduce that $a [\Delta(A_{12})]_{12} = 0$ for all $a \in \mathcal{A}$ and all $A_{12} \in \mathfrak{T}_{12}$. Using the fact that $m =0$ is the only element in $\mathcal{M}$ satisfying $\mathcal{A} m = \{0\}$, we can infer that $[\Delta(A_{12})]_{12} = 0$ for all $A_{12} \in \mathfrak{T}_{12}$.
\\
\\
\textbf{Step} 6. $\Delta(A) = 0$ for all $A \in \mathfrak{T}$. \\

  It follows from Steps 1-5 that $\Delta(A_{11}) = 0$, $\Delta(A_{12}) = 0$, and $\Delta(A_{22}) = 0$,  for all $A_{11} \in \mathfrak{T}_{11}$, $A_{12} \in \mathfrak{T}_{12}$, and  $A_{22} \in \mathfrak{T}_{22}$. Therefore, for any $A \in \mathfrak{T}$, we  have
\begin{align*}
\Delta(A) = \Delta(A_{11} + A_{12} + A_{22}) = 0.
\end{align*}
Thereby, our proof is complete.
\end{proof}

In the following, we provide an example that shows that the conditions of Theorem \ref{2} are not superfluous.

\begin{example} Let $\mathcal{R}$ be a ring such that the square of each element in $\mathcal{R}$ is zero, but the product of some nonzero elements in $\mathcal{R}$ is nonzero. Let
\begin{align*}
\mathfrak{A} = \Bigg\{\left [\begin{array}{ccc}
0 & a & b\\
0 & 0 & a\\
0 & 0 & 0
\end{array}\right ] \ : \ a, b \in \mathcal{R}\Bigg\}
\end{align*}
Clearly, $\mathfrak{A}$ is a ring with the annihilator
$$
ann(\mathfrak{A}) = \Bigg\{\left [\begin{array}{ccc}
0 & x & y\\
0 & 0 & x\\
0 & 0 & 0
\end{array}\right ] \ : x \in ann(\mathcal{R}), y \in \mathcal{R}\Bigg\},$$ where $ann(\mathcal{R})$ denotes the annihilator of $\mathcal{R}$. We know that the annihilator of any ring is an ideal in that ring. So $ann(\mathfrak{A})$ is an ideal of $\mathfrak{A}$, and we can consider it as a $\mathfrak{A}$-bimodule. Define the mapping $\delta:\mathfrak{A} \rightarrow \mathfrak{A}$ by $$\delta\Bigg(\left [\begin{array}{ccc}
0 & a & b\\
0 & 0 & a\\
0 & 0 & 0
\end{array}\right ]\Bigg) = \left [\begin{array}{ccc}
0 & a & 0\\
0 & 0 & a\\
0 & 0 & 0
\end{array}\right ].$$
It is easy to see that $\delta$ is a commuting Jordan derivation (see \cite[Example 2.8]{H}). Now let
$$
\mathfrak{T} = Tri(\mathfrak{A}, ann(\mathfrak{A}), \mathfrak{A}) = \Bigg\{\left [\begin{array}{ccc}
A & X \\
0 & B
\end{array}\right ] \ : A, B \in \mathfrak{A}, X \in ann(\mathfrak{A})\Bigg\}.$$

Define the mapping $\Delta:\mathfrak{T} \rightarrow \mathfrak{T}$ by $$\Delta\Bigg(\left [\begin{array}{ccc}
A & X\\
0 & B
\end{array}\right ]\Bigg) = \left [\begin{array}{ccc}
\delta(A) & 0\\
0 & 0
\end{array}\right ].$$
A straightforward verification shows that $\Delta(\mathfrak{X}^2) = 2 \Delta(\mathfrak{X})\mathfrak{X} = 2 \mathfrak{X} \Delta(\mathfrak{X})$ for all $\mathfrak{X} \in \mathfrak{T}$. As can be seen, $\Delta$ is a nonzero commuting Jordan derivation of $\mathfrak{T}$, and this demonstrates that the conditions outlined in Theorem \ref{2} are not superfluous.
\end{example}

As mentioned in Section 1, the following result is quite different from that of \cite{P}.

\begin{corollary} Let $\mathcal{A}$ and $\mathcal{B}$ be 2-torsion free rings such that each of them is either
semiprime or satisfies Condition (P) and let $\mathcal{M}$ be as in Theorem \ref{2}. If triangular ring $\mathfrak{T} = Tri(\mathcal{A}, \mathcal{M}, \mathcal{B})$ is commutative, then every Jordan derivation on $\mathfrak{T}$ is identically zero.
\end{corollary}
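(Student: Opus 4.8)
The plan is to obtain the Corollary as an immediate consequence of Theorem~\ref{2}: once $\mathfrak{T}$ is assumed commutative, any Jordan derivation on it is automatically a \emph{commuting} Jordan derivation, and all the remaining hypotheses of Theorem~\ref{2} are exactly the ones imposed on $\mathcal{A}$, $\mathcal{B}$ and $\mathcal{M}$ in the statement of the Corollary. So there is essentially nothing to prove beyond this observation.

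\textbf{Step 1 (commutativity gives ``commuting'').} If $\mathfrak{T} = Tri(\mathcal{A}, \mathcal{M}, \mathcal{B})$ is commutative, then $[X, Y] = XY - YX = 0$ for all $X, Y \in \mathfrak{T}$; in particular $[\Delta(X), X] = 0$ for every $X \in \mathfrak{T}$, which is precisely the definition of $\Delta$ being commuting on $\mathfrak{T}$. (In fact every self-map of a commutative ring is commuting, so the Jordan-derivation property of $\Delta$ plays no role here.) Hence an arbitrary Jordan derivation $\Delta$ on $\mathfrak{T}$ is a commuting Jordan derivation.

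\textbf{Step 2 (apply Theorem~\ref{2}).} By hypothesis $\mathcal{A}$ and $\mathcal{B}$ are $2$-torsion free and each of them is semiprime or satisfies Condition~(P), and $\mathcal{M}$ is as in Theorem~\ref{2}, i.e. a $2$-torsion free faithful $(\mathcal{A}, \mathcal{B})$-bimodule for which $\mathcal{A}m = \{0\}$ (resp. $m\mathcal{B} = \{0\}$) forces $m = 0$. Thus all the hypotheses of Theorem~\ref{2} hold and, by Step~1, $\Delta$ is a commuting Jordan derivation on $\mathfrak{T}$; Theorem~\ref{2} then gives $\Delta = 0$, which is what we wanted.

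I do not expect a genuine obstacle here, since all the work is in Theorem~\ref{2}. The only point that might give one pause is whether a commutative $\mathfrak{T}$ satisfying these hypotheses can be nonzero: comparing the two products of an element supported in $\mathcal{A}$ with an element supported in $\mathcal{M}$ shows that commutativity forces $am = 0$ for all $a \in \mathcal{A}$ and $m \in \mathcal{M}$, hence $\mathcal{M} = 0$ (by the annihilator condition on $\mathcal{M}$), and then $\mathcal{A} = \mathcal{B} = 0$ (since the zero module is a faithful module only over the zero ring). So the Corollary is in fact a degenerate case of Theorem~\ref{2}; but this observation is not needed, and I would present only the two-step reduction above.
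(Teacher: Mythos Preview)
Your proof is correct and matches the paper's approach: the paper states this corollary without proof, treating it as an immediate consequence of Theorem~\ref{2} via the observation that every map on a commutative ring is automatically commuting. Your additional remark that the hypotheses actually force $\mathfrak{T}=0$ (so the corollary is vacuous) is a valid observation not made in the paper, but as you note it is not needed for the argument.
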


\begin{corollary} Let $\mathcal{A}$ and $\mathcal{B}$ be 2-torsion free rings such that each of them is either
semiprime or satisfies Condition (P), let $\mathcal{M}$ be as in Theorem \ref{2} and let $\{d_n\}_{n = 0}^{\infty}$ be a commuting Jordan higher derivation on triangular ring $\mathfrak{T} = Tri(\mathcal{A}, \mathcal{M}, \mathcal{B})$, i.e. $d_n(A^2) = \sum_{k = 0}^{n}d_{n - k}(A)d_{k}(A)$, $d_0(A) = A$ and $[d_n(A), A] = 0$ for all nonnegative integers $n$ and any $A \in \mathfrak{T}$. Then $d_n = 0$ for all $n \in \mathbb{N}$.
\end{corollary}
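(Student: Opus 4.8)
The plan is to induct on $n$ and reduce every stage to Theorem \ref{2}. Since the hypotheses on $\mathcal{A}$, $\mathcal{B}$ and $\mathcal{M}$ here are exactly those of Theorem \ref{2}, nothing extra is needed: at each step it suffices to verify that the map in question is a commuting Jordan derivation of $\mathfrak{T}$. Recall that each component $d_n$ of a higher derivation is an additive map, so the only thing to check is the quadratic identity together with the commuting condition.

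\textbf{Base case.} For $n = 1$ the defining relation reads $d_1(A^2) = d_0(A)\,d_1(A) + d_1(A)\,d_0(A) = A\,d_1(A) + d_1(A)\,A$ for all $A \in \mathfrak{T}$, using $d_0 = \mathrm{id}$. Hence $d_1$ is a Jordan derivation, and by hypothesis $[d_1(A), A] = 0$ for all $A$, so $d_1$ is a commuting Jordan derivation on $\mathfrak{T}$. Theorem \ref{2} gives $d_1 = 0$.

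\textbf{Inductive step.} Suppose $d_1 = d_2 = \cdots = d_{n-1} = 0$ for some $n \geq 2$. In the identity
$$d_n(A^2) = \sum_{k=0}^{n} d_{n-k}(A)\,d_k(A),$$
every term with $1 \leq k \leq n-1$ vanishes, because then both $k$ and $n-k$ lie in $\{1,\dots,n-1\}$ and so $d_k(A) = 0$. Only the $k=0$ and $k=n$ terms survive, yielding $d_n(A^2) = d_n(A)\,A + A\,d_n(A)$ for all $A \in \mathfrak{T}$. Thus $d_n$ is again a Jordan derivation, and it is commuting by the standing assumption $[d_n(A),A]=0$; Theorem \ref{2} forces $d_n = 0$. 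By induction, $d_n = 0$ for every $n \in \mathbb{N}$.

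There is essentially no obstacle here beyond the invocation of the main theorem: the one point to be careful about is that the cross terms $d_{n-k}(A)\,d_k(A)$ with $0 < k < n$ in the higher-derivation relation genuinely disappear under the induction hypothesis, which is precisely what lets $d_n$ inherit the plain Jordan-derivation identity and hence fall within the scope of Theorem \ref{2}.
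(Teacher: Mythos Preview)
Your proof is correct and follows essentially the same approach as the paper's: induct on $n$, use the induction hypothesis to kill the cross terms in the higher-derivation relation so that $d_n$ becomes a commuting Jordan derivation, and then apply Theorem \ref{2}. The paper additionally spells out the $n=2$ case explicitly before the general inductive step, but this is cosmetic rather than a substantive difference.
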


\begin{proof} Using induction on $n$, we prove this corollary. According to Theorem \ref{2}, the result holds trivially for $n = 1$. So it is observed that
\begin{align*}
d_2(A^2) = d_2(A) A + (d_1(A))^2 + A d_2(A) = d_2(A)A + A d_2(A)
\end{align*}
for all $A \in \mathfrak{T}$, which means that $d_2$ is a Jordan derivation on $\mathfrak{T}$. By hypothesis, $[d_2(A), A] = 0$ for all $A \in \mathfrak{T}$ and so $d_2$ is a commuting Jordan derivation on the triangular ring $\mathfrak{T}$. It follows from Theorem \ref{2} that $d_2$ is zero. Let $n$ be an arbitrary positive integer and assume that the result holds for any $k < n$. We prove the result for $n$. In view of our assumption, $[d_n(A), A] = 0$ for all $A \in \mathfrak{T}$. Hence, we have
\begin{align*}
d_n(A^2) = \sum_{k = 0}^{n}d_{n - k}(A)d_{k}(A) = d_n(A)A + A d_n(A) = 2 d_n(A)A = 2 A d_n(A)
\end{align*}
for all $A \in \mathfrak{T}$, which means that $d_n$ is a commuting Jordan derivation on $\mathfrak{T}$. Reusing of Theorem \ref{2} gives the result.
\end{proof}

\begin{corollary} Let $\mathcal{A}$ and $\mathcal{B}$ be 2-torsion free rings such that each of them is either semiprime or satisfies Condition (P), let $\mathcal{M}$ and $\mathfrak{T}$ be as in Theorem \ref{2} and let $B_0 \in \mathfrak{T}$. If $[[A, B_0], A] = 0$ for all $A \in \mathfrak{T}$, then $B_0 \in Z(\mathfrak{T})$, the center of $\mathfrak{T}$.
\end{corollary}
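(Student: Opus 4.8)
The plan is to recognize the inner map $A \mapsto [A,B_0]$ as a commuting Jordan derivation on $\mathfrak{T}$ and then quote Theorem \ref{2}. Concretely, I would introduce $\mathfrak{F}:\mathfrak{T}\to\mathfrak{T}$ defined by $\mathfrak{F}(A) = [A,B_0] = AB_0 - B_0 A$, which is visibly additive.

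The first step is to verify that $\mathfrak{F}$ is a derivation: expanding $[AB,B_0]$ and comparing with $[A,B_0]B + A[B,B_0]$ gives $\mathfrak{F}(AB) = \mathfrak{F}(A)B + A\mathfrak{F}(B)$ for all $A,B\in\mathfrak{T}$, a routine cancellation. In particular $\mathfrak{F}$ is a Jordan derivation, $\mathfrak{F}(A^2) = \mathfrak{F}(A)A + A\mathfrak{F}(A)$. The second step is simply to observe that the hypothesis $[[A,B_0],A] = 0$ for all $A\in\mathfrak{T}$ is, verbatim, the statement $[\mathfrak{F}(A),A] = 0$ for all $A$, i.e. that $\mathfrak{F}$ is commuting on $\mathfrak{T}$.

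The third and final step is to apply Theorem \ref{2}: since $\mathcal{A}$, $\mathcal{B}$, $\mathcal{M}$ are assumed to satisfy exactly the hypotheses of that theorem, the commuting Jordan derivation $\mathfrak{F}$ must be identically zero. Hence $[A,B_0] = 0$ for every $A\in\mathfrak{T}$, which is precisely the assertion $B_0\in Z(\mathfrak{T})$.

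I do not expect any genuine obstacle here; the corollary is essentially an immediate reformulation of Theorem \ref{2} once one notices that the centralizing condition on $B_0$ is the commuting condition for the inner derivation it induces. The only point deserving a line of care is the verification that this inner map really is a Jordan derivation (it is, being an honest derivation), so that Theorem \ref{2} is applicable as stated.
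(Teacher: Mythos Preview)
Your proposal is correct and follows essentially the same approach as the paper: the paper defines $\Delta_{B_0}(A) = [A,B_0]$, observes it is a derivation, notes that the hypothesis makes it commuting, and invokes Theorem~\ref{2} to conclude $\Delta_{B_0}=0$ and hence $B_0\in Z(\mathfrak{T})$. Your write-up is slightly more explicit about the derivation identity, but the argument is identical.
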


\begin{proof} It is clear that $\Delta_{B_0}: \mathfrak{T} \rightarrow \mathfrak{T}$ defined by $\Delta_{B_0}(A) = [A, B_0]$ is a derivation. Since   $[[A, B_0], A] = 0$ for all $A \in \mathfrak{T}$, $\Delta_{B_0}$ is a commuting derivation on $\mathfrak{T}$. It follows from Theorem \ref{2} that $\Delta_{B_0}$ is zero which yields that $B_0 \in Z(\mathfrak{T})$, as desired.

\end{proof}

Posner proved in \cite[Lemma 3]{P} that if $\mathcal{R}$ is a prime ring and $d$ is a commuting derivation of $\mathcal{R}$, then $\mathcal{R}$ is commutative or $d$ is zero. In the next corollary, we show that every commuting Jordan derivation on a 2-torsion free ring which either is semiprime or satisfies Condition (P) is identically zero under certain conditions.

\begin{corollary} Let $\mathcal{A}$ and $\mathcal{B}$ be 2-torsion free rings such that each of them is either
semiprime or satisfies Condition (P) and let $\mathcal{M}$ and $\mathfrak{T}$ be as in Theorem \ref{2}. Let $d:\mathcal{A} \rightarrow \mathcal{A}$ and $D:\mathcal{B} \rightarrow \mathcal{B}$ be two commuting Jordan derivations and let $\mathfrak{G} : \mathcal{M} \rightarrow \mathcal{M}$ be a mapping which satisfies
\begin{align}
 \mathfrak{G}(am + mb) = 2 d(a)m + 2 \mathfrak{G}(m)b = 2a\mathfrak{G}(m) + 2m D(b)
\end{align}
for all $a \in \mathcal{A}$, $b \in \mathcal{B}$ and $m \in \mathcal{M}$. Then $d$, $D$ and $\mathfrak{G}$ are identically zero.
\end{corollary}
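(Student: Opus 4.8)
The plan is to bundle $d$, $D$ and $\mathfrak{G}$ into a single map on the triangular ring $\mathfrak{T}=Tri(\mathcal{A},\mathcal{M},\mathcal{B})$ and then quote Theorem~\ref{2}. Define $\Delta:\mathfrak{T}\to\mathfrak{T}$ by
\[
\Delta\!\left(\left[\begin{array}{cc} a & m\\ 0 & b\end{array}\right]\right)=\left[\begin{array}{cc} d(a) & \mathfrak{G}(m)\\ 0 & D(b)\end{array}\right]\qquad(a\in\mathcal{A},\ m\in\mathcal{M},\ b\in\mathcal{B});
\]
the goal is to show that $\Delta$ is a commuting Jordan derivation and then invoke Theorem~\ref{2}. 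First I would check that $\Delta$ is additive. Since $d$ and $D$ are Jordan derivations they are additive, so only the additivity of $\mathfrak{G}$ needs attention. Putting $b=0$ in (2.15) and using that $\mathcal{M}$ is $2$-torsion free gives $d(a)m=a\mathfrak{G}(m)$ for all $a\in\mathcal{A}$, $m\in\mathcal{M}$; hence, for $m_{1},m_{2}\in\mathcal{M}$,
\[
a\,\mathfrak{G}(m_{1}+m_{2})=d(a)(m_{1}+m_{2})=d(a)m_{1}+d(a)m_{2}=a\,\mathfrak{G}(m_{1})+a\,\mathfrak{G}(m_{2}),
\]
so $a\big(\mathfrak{G}(m_{1}+m_{2})-\mathfrak{G}(m_{1})-\mathfrak{G}(m_{2})\big)=0$ for every $a\in\mathcal{A}$, and the hypothesis on $\mathcal{M}$ (that $\mathcal{A}m=\{0\}$ implies $m=0$) yields the additivity of $\mathfrak{G}$.

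Next I would verify the Jordan and commuting identities. Write $X=\left[\begin{smallmatrix} a & m\\ 0 & b\end{smallmatrix}\right]$, so that $X^{2}=\left[\begin{smallmatrix} a^{2} & am+mb\\ 0 & b^{2}\end{smallmatrix}\right]$. Because $d$ and $D$ are \emph{commuting} Jordan derivations, $d(a^{2})=2d(a)a=2ad(a)$ and $D(b^{2})=2D(b)b=2bD(b)$, which handles the diagonal entries of both identities (the diagonals of $\Delta(X)X$ and $X\Delta(X)$ coincide, and their sum equals the diagonal of $\Delta(X^{2})$). A routine $2\times 2$ multiplication reduces the $(1,2)$-entry of $\Delta(X^{2})=\Delta(X)X+X\Delta(X)$ to the identity $\mathfrak{G}(am+mb)=d(a)m+\mathfrak{G}(m)b+a\mathfrak{G}(m)+mD(b)$, and the $(1,2)$-entry of $\Delta(X)X=X\Delta(X)$ to the identity $d(a)m+\mathfrak{G}(m)b=a\mathfrak{G}(m)+mD(b)$. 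The latter is the equality $2d(a)m+2\mathfrak{G}(m)b=2a\mathfrak{G}(m)+2mD(b)$ of (2.15) divided by $2$, and the former is the half-sum of the two equalities in (2.15); both divisions by $2$ are legitimate since $\mathcal{M}$ is $2$-torsion free. Hence $\Delta$ is a commuting Jordan derivation on $\mathfrak{T}$, and since $\mathcal{A}$, $\mathcal{B}$ and $\mathcal{M}$ satisfy the hypotheses of Theorem~\ref{2}, that theorem gives $\Delta=0$; evaluating $\Delta$ on $\left[\begin{smallmatrix} a & 0\\ 0 & 0\end{smallmatrix}\right]$, $\left[\begin{smallmatrix} 0 & m\\ 0 & 0\end{smallmatrix}\right]$ and $\left[\begin{smallmatrix} 0 & 0\\ 0 & b\end{smallmatrix}\right]$ then shows that $d$, $D$ and $\mathfrak{G}$ are identically zero.

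I do not anticipate a genuine obstacle: the whole argument is the familiar ``matricial encoding'' of data on $(\mathcal{A},\mathcal{M},\mathcal{B})$ as a map on $\mathfrak{T}$, and every step is forced once $\Delta$ has been written down. The two points that require a little care are (i) deducing the additivity of $\mathfrak{G}$ from (2.15) together with the non-degeneracy of the $\mathcal{A}$-action on $\mathcal{M}$ — this is what makes $\Delta$ a Jordan derivation in the sense used by Theorem~\ref{2} — and (ii) the repeated appeal to the $2$-torsion-freeness of $\mathcal{M}$ to pass from the ``doubled'' relations in (2.15) to the plain relations demanded by the Jordan and the commuting conditions.
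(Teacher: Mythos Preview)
Your proposal is correct and follows the same route as the paper: define $\Delta$ on $\mathfrak{T}$ by the block formula, verify that $\Delta$ is a commuting Jordan derivation, and apply Theorem~\ref{2}. Your write-up is in fact slightly more careful than the paper's, since you explicitly derive the additivity of $\mathfrak{G}$ from (2.15) together with the hypothesis that $\mathcal{A}m=\{0\}$ forces $m=0$, a point the paper's proof leaves implicit.
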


\begin{proof} Define $\Delta: \mathfrak{T} \rightarrow \mathfrak{T}$ by
\begin{align*}
\Delta \left(\left[\begin{array}{cc}
	a & m\\
	0 & b
	\end{array}\right]\right) = \left[\begin{array}{cc}
	d(a) & \mathfrak{G}(m)\\
	0 & D(b)
	\end{array}\right]
\end{align*}
Let $A = \left[\begin{array}{cc}
	a & m\\
	0 & b
	\end{array}\right]$ be an arbitrary element of $\mathfrak{T}$. We have
\begin{align*}
\Delta(A^2) & = \Delta\left(\left[\begin{array}{cc}
	a^2 & am + mb\\
	0 & b^2
	\end{array}\right]\right) = \Delta\left(\left[\begin{array}{cc}
	d(a^2) & \mathfrak{G}(am + mb)\\
	0 & D(b^2)
	\end{array}\right]\right) \\ & = \left[\begin{array}{cc}
	2ad(a) & 2(a \mathfrak{G}(m) + m D(b))\\
	0 & 2b D(b)
	\end{array}\right] \\ & = 2 A \Delta(A).
\end{align*}
One can easily show that $\Delta(A^2) = 2 \Delta(A)A$ for all $A \in \mathfrak{T}$, and this means that $\Delta$ is a commuting Jordan derivation on $\mathfrak{T}$. It follows from Theorem \ref{2} that $\Delta$ is identically zero and this yields that $d$, $D$ and $\mathfrak{G}$ all are   zero, as desired.
\end{proof}

To conclude this paper, we present an example for rings $\mathcal {A}$ and $\mathcal {B}$ and module $\mathcal {M}$ satisfying  the conditions of Theorem \ref{2}.
\begin{example} Let $\mathbb{Z}$ be the set of all integers. Set
\begin{align*}
\mathcal{A} = \mathcal{B} = \left\{\left [\begin{array}{cc}
2n & 0\\
0 & 2n
\end{array}\right ] \ : \ n \in \mathbb{Z} \right\}.
\end{align*}
It is obvious that $\mathcal{A}$ and $\mathcal{B}$ does not contain identity. Let
\begin{align*}
\mathcal{M} = \left\{\left [\begin{array}{cc}
i & j\\
0 & k
\end{array}\right ] \ : \ i, j, k \in \mathbb{Z} \right\}.
\end{align*}
A straightforward verification shows that $\mathcal{A}$ is a semiprime ring which also satisfy Condition (P) and further $\mathcal{M}$ is a faithful $(\mathcal{A}, \mathcal{B})$-bimodule. Moreover $m = 0$ is the only element of $\mathcal{M}$ satisfying $\mathcal{A} m =\{0\}$ (resp. $m \mathcal{B} =\{0\}$). Therefore, the module $\mathcal{M}$ satisfies all the conditions of Theorem \ref{2}.
\end{example}

\section*{Acknowledgement} The authors are greatly indebted to the referee for his/her valuable suggestions and careful reading of the paper.

\vspace{.25cm} {\bf Conflict of interest statement.} \\ The authors state that there is no conflict of interest.


\vspace{.25cm} {\bf Author Contribution and Funding Statement.} \\ The authors received no funding for this article.

\bibliographystyle{amsplain}

\begin{thebibliography}{99}


\bibitem{B}  M. Bre$\check{\textrm{s}}$ar, {\it Commuting maps: a survey}, Taiwanese J. Math., \textbf{8} (2004), 361-397.

\bibitem{B1}  \bysame, {\it Jordan derivations on semiprime rings}, Proc. Amer. Math. Soc., \textbf{104}(4) (1988), 1003-1006

\bibitem{B-V}  M. Bre$\check{\textrm{s}}$ar and J. Vukman, {\it On left drivations and related mappings}, Proc. Amer. Math. Soc., \textbf{110} (1) (1990), 7-16.

\bibitem{Cu} J. Cusack,  {\it Jordan derivations on rings}, Proc.Amer. Math. Soc. \textbf{53} (1975), 1104-1110.

\bibitem{C} W. S. Cheung, {\it Commuting maps on triangular algebras}, J. Lond. Math. Soc., \textbf{63} (2002), 117-127.

\bibitem{D} N. Divinsky,  {\it On commuting automorphisms of rings}, Trans. R. Soc. Can. Sect. III {\bf 3} (49) (1955) 19-22.


\bibitem{F}  A. Fo$\check{\textrm{s}}$ner and W. Jing, {\it A note on Jordan derivations of triangulr rings},
Aequationes Math., \textbf{94} (2020), 277-285.

\bibitem{H}A. Hosseini and A. Fo$\check{s}$ner,  {\it The Image of Jordan Left Derivations on Algebras}, Bol. Soc. Paran. Mat., \textbf{38}(6) (2020), 53-61.

\bibitem{He}I. N. Herstein,  {\it Jordan derivations of prime rings}, Proc.Amer. Math. Soc. \textbf{8} (1957), 1104-1110.


\bibitem{G} H. Ghahramani,  {\it Characterizing Jordan derivations of matrix rings through
zero products},  Math. Slovaca, \textbf{65} (2015), 1277-1290.



\bibitem{P} E. Posner, {\it Derivations in prime rings}, Proc. Amer. Soc., \textbf{8} (1957), 1093-1100.

\bibitem{V8}  J. Vukman, {\it Identities with derivations and automorphisms on semiprime rings}, Int. J. Math. Math. Sci., \textbf{7} (2005) 1031-1038.


\bibitem{V9}  \bysame, {\it On left Jordan derivations of rings and Banach algebras}, Aequationes Math., \textbf{75} (2008), 260-266.


\bibitem{Z} J. Zhang and W. Yu,  {\it Jordan derivation of triangular algebras}, Linear Algebra and its Applications,
\textbf{419} (2006), 251-255.

\end{thebibliography}

\end{document}